\documentclass[11pt, a4paper]{amsart}
\usepackage{amsmath,amssymb,amscd,amsfonts, graphicx, verbatim}

\setlength{\textwidth}{15truecm}
\setlength{\textheight}{22truecm}
\setlength{\topmargin}{-1cm}
\setlength{\oddsidemargin}{0mm}
\setlength{\evensidemargin}{0mm}
\setlength{\abovedisplayskip}{3mm}
\setlength{\belowdisplayskip}{3mm}
\setlength{\abovedisplayshortskip}{0mm}
\setlength{\belowdisplayshortskip}{2mm}
\setlength{\baselineskip}{20pt}
\numberwithin{equation}{section}
\linespread{1.04}
\raggedbottom
\voffset=2.4cm
\hoffset=0.5cm

\def \seq {\sim_s}

\newcommand{\R}{{\mathbb R}}

\newcommand{\N}{{\mathbb N}}
\newcommand{\ol}{\overline}
\newcommand{\wt}{\widetilde}

\newcommand{\rk}{\operatorname {rk}}
\newcommand{\calH}{\mathcal H}

\newtheorem{teo}{Theorem }[section]
\newtheorem{prop}[teo]{Proposition }
\newtheorem{rem}[teo]{Remark }
\newtheorem{notation}[teo]{Notation }
\newtheorem{lem}[teo]{Lemma } 
\newtheorem{cor}[teo]{Corollary }
\newtheorem{defi}[teo]{Definition }
\newtheorem{ex}[teo]{Example }

\begin{document}
\title{Algebraic approximation preserving dimension}

\author{M. Ferrarotti} 
\address{Dipartimento di Matematica\\Politecnico di Torino\\Corso Duca
degli Abruzzi 24\\I-10129 Torino, Italy}
\email{ferrarotti@polito.it}
\thanks{This research was partially supported by M.I.U.R. and by G.N.S.A.G.A}

\author{E. Fortuna}
\address{Dipartimento di Matematica\\Universit\`a di Pisa\\Largo
  B. Pontecorvo 5\\I-56127 Pisa, Italy}
\email{fortuna@dm.unipi.it}

\author{L. Wilson}
\address{Department of Mathematics\\University of Hawaii, Manoa\\Honolulu, HI 96822, USA}
\email{les@math.hawaii.edu}


\subjclass[2010]{Primary 14P15}




\begin{abstract}
We prove that each semialgebraic subset of $\R^n$ of positive codimension 
can be locally approximated of any order by means of an algebraic set 
of the same dimension.  As a consequence of previous results, algebraic 
approximation preserving dimension holds also for semianalytic sets.

\end{abstract}

\maketitle 

\section{Introduction}

If $A$ and $B$ are two closed subanalytic subsets of $\R^n$, the Hausdorff distance 
between their intersections with the sphere of radius $r$ centered 
at a common point $P$ can be used  to ``measure'' how 
near the two sets  are at $P$.  We say that $A$ and $B$ are 
$s$--equivalent (at $P$) if the previous  distance tends to $0$ more rapidly than $r^s$ 
(if so, we write $A \sim_sB$).

In the papers  \cite{FFW-SNS}, \cite{FFW-germs} and  \cite{FFW-semi} we addressed 
the question of the existence of an algebraic representative $Y$ in the class of 
$s$--equivalence of a given subanalytic set $A$ at a fixed point $P$. In this case we also 
say that $Y$ $s$-approximates $A$.

The answer to the previous  question is in general negative for subanalytic sets 
(see \cite{FFW-germs}). 

On the other hand  in \cite{FFW-SNS} it was proved that,  
for any real number $s\geq 1$ and for any closed semialgebraic set $A \subset \R^n$ of
codimension $\geq 1$, there exists an algebraic subset
$Y$ of $\R^n$ such that $A \sim_sY$.
The proof of the latter result consists in finding equations for $Y$ starting from the 
polynomials appearing in a presentation 
of $A$. For instance if $A=\{x\in \R^n\ |\ f(x)=0, h(x)\geq 0\}$ with 
$f,h\in \R[x]$, then  $A$ can be $s$-approximated by the algebraic set 
$Y=\{x\in \R^n\ |\ (f^2- h^m)(x)= 0\}$ for any odd integer $m$ sufficiently 
large. This procedure does not guarantee that $Y$ has the same dimension as $A$ at $P$ 
as the following trivial example shows.

Let $A$ be the positive $x_3$-axis in $\R^3$ presented 
as $A=\{(x_1,x_2,x_3)\in \R^3\ |\ x_1^2+x_2^2=0, x_3\geq 0\}$. Then according to the 
previous procedure, for any sufficiently large odd integer $m$,  $A$ is $s$-approximated 
at the origin $O$ by the algebraic set 
$Y=\{(x_1,x_2,x_3)\in \R^3\ |\ (x_1^2+x_2^2)^2 - x_3^m= 0\}$, 
whose germ at $O$ has dimension $2$. 
However we can also $s$-approximate $A$ at $O$ by the 
$1$-dimensional algebraic set 
$W=\{(x_1,x_2,x_3)\in \R^3\ |\ x_1^2- x_3^m= 0,x_2=0 \}$ 
for any sufficiently large odd  integer $m$. 
This algebraic set can be obtained by a similar construction as before but starting from 
the different presentation $A =\{(x_1,x_2,x_3)\in \R^3\ |\ x_1=0, x_2=0, x_3\geq 0\}$.

In \cite{FFW-semi} we proved that, for any $s \geq 1$,  
any closed semianalytic subset $A\subseteq \R^n$ is $s$-equivalent to 
a semialgebraic set $Y \subset \R^n$ having the same local dimension as $A$. 
However the arguments used in the proof of this latter result do not guarantee that, 
even if $A$ is analytic, it can be approximated  by means of an algebraic one 
of the same dimension.

In this paper we prove in Theorem \ref{preserve-dim}  that any semialgebraic set of
codimension $\geq 1$ is $s$-equivalent to an algebraic one of the same dimension. 
Using  the mentioned result of \cite{FFW-semi}, we obtain 
(Corollary \ref{preserve-semianal-dim}) that 
any semianalytic  set of codimension $\geq 1$ can be  $s$-approximated by an 
algebraic one preserving the local dimension. The proof of Theorem \ref{preserve-dim} 
works  provided that the semialgebraic set is described by means of 
a suitable presentation, as in
the previous example. Therefore Section \ref{reg-pres} is devoted to introduce the notion 
of ``regular presentation"  and to prove that  one can reduce to work with 
regularly presented sets.

\section{Basic properties of $s$-equivalence}

In this section we recall the definition and 
some basic properties of $s$-equivalence of subanalytic sets at a common point which, 
without loss of genericity, we can assume to be the origin $O$ of $\R^n$. We refer 
the reader to \cite{FFW-germs} for the proofs of these results.

If $A$ and $B$ are non-empty compact subsets of $\R^n$, 
let $\delta (A,B) = \sup _{x\in B} d(x,A) $. Thus, denoting by $D(A,B)$ 
the classical Hausdorff distance between the two sets, we have that 
$D(A,B)= \max  \{\delta (A,B),\ \delta (B,A)\}$.

\begin{defi} Let $A$ and $B$ be closed subanalytic subsets 
of $\R^n$ with $O\in A\cap B$. Let $s$ be a real number $\geq 1$. Denote by $S_r$ the 
sphere of radius $r$ centered at the origin. 
\begin{enumerate}
\item[(a)]  We say that $A\leq_s B$  if one of the following conditions holds:
\begin{enumerate}
\item[(i)] $O$ is isolated  in $A$,
\item[(ii)] $O$ is non-isolated both in $A$ and in $B$ and 
$$\lim_{r\to 0}\frac{\delta(B \cap S_r,A\cap S_r)}{r^s} =0.$$ 
\end {enumerate}
\item[( b)] We say that $A$ and
$B$ are $s$--equivalent  (and we will write $A \sim_sB$) if 
$A\leq_s B$ and $B\leq_s A$. 
\end{enumerate}
\end{defi}

Observe that, if $A\subseteq B$, then $A \leq_sB$ for any $s\geq 1$. 
It is easy to check that $\leq_s$ is transitive and that $\sim_s$ is an
equivalence relation. 

The following result shows the behavior of $s$-equivalence  
with respect to the union of sets:

\begin{prop}\label{union}  Let $A$, $A'$, $B$ and $B'$ be closed
subanalytic subsets of $\R^n$.
\begin{enumerate}
\item If $A \leq_s B$ and $A' \leq_s B'$, then $A\cup A' \leq_s B\cup B'$.
\item If $A \sim_s B$ and $A' \sim_s B'$, then $A\cup A' \sim_s B\cup B'$.
\end{enumerate}
\end{prop}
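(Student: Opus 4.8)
The plan is to prove (1) and to deduce (2) from it by symmetry. Indeed, if $A\sim_s B$ and $A'\sim_s B'$ then $A\le_s B$, $B\le_s A$, $A'\le_s B'$ and $B'\le_s A'$; applying (1) to the first and third of these, and then to the second and fourth, gives $A\cup A'\le_s B\cup B'$ together with $B\cup B'\le_s A\cup A'$, that is $A\cup A'\sim_s B\cup B'$.

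For (1) I would distinguish cases according to the clause defining $\le_s$. If $O$ is isolated in $A\cup A'$ there is nothing to prove, since condition (i) applies. Otherwise $O$ is non-isolated in at least one of $A,A'$, say in $A$; then, because $A\le_s B$ and $O$ is not isolated in $A$, we are in case (ii) for $A\le_s B$, so $O$ is non-isolated in $B$ and hence in $B\cup B'$, and it is again case (ii) that we must verify, i.e.\ $\lim_{r\to 0} r^{-s}\,\delta\big((B\cup B')\cap S_r,\,(A\cup A')\cap S_r\big)=0$. The estimate rests on the elementary monotonicity fact that $\delta(B\cup B',A\cup A')\le\max\{\delta(B,A),\delta(B',A')\}$: each point of $A$ lies within $\delta(B,A)$ of $B\subseteq B\cup B'$ and each point of $A'$ within $\delta(B',A')$ of $B'\subseteq B\cup B'$. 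Intersecting all four sets with $S_r$, dividing by $r^s$ and letting $r\to0$ then concludes the proof, provided both $r^{-s}\,\delta(B\cap S_r,A\cap S_r)$ and $r^{-s}\,\delta(B'\cap S_r,A'\cap S_r)$ go to $0$: if $O$ is non-isolated in both $A$ and $A'$ this is exactly the hypothesis (case (ii) holds for $A\le_s B$ and for $A'\le_s B'$), while if $O$ is isolated in $A'$ then $A'\cap S_r=\emptyset$ for small $r$, so $(A\cup A')\cap S_r=A\cap S_r$ and only the term coming from $A\le_s B$ survives.

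The point to handle with some care — and the only place where anything could go wrong — is the bookkeeping of empty intersections and of the conventions that make $\delta$ well defined. For $r$ small, whenever $A\cap S_r\ne\emptyset$ the hypothesis $A\le_s B$ in case (ii) forces $B\cap S_r\ne\emptyset$ (else $r^{-s}\,\delta(B\cap S_r,A\cap S_r)$ would equal $+\infty$ for $r$ arbitrarily close to $0$), hence $(B\cup B')\cap S_r\ne\emptyset$ and every distance appearing above is finite; the same remark with primes covers the $A'$ term. Once this is observed the argument is purely formal, so I do not anticipate a genuine obstacle: the substance of the proposition is just the monotonicity of $\delta$ under enlarging its first argument, combined with a careful treatment of the isolated-point alternative in the definition of $\le_s$.
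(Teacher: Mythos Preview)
Your argument is correct. The paper itself does not prove this proposition: it states the result in the preliminaries and refers the reader to \cite{FFW-germs} for the proof. Your approach---reducing (2) to (1), and for (1) using the elementary inequality $\delta(B\cup B',A\cup A')\le\max\{\delta(B,A),\delta(B',A')\}$ together with a case analysis on whether $O$ is isolated in $A$ or $A'$---is the natural one and is essentially what one finds in that reference. Your care about empty intersections is appropriate; note that in the subanalytic setting, once $O$ is non-isolated in $B$ the local conic structure guarantees $B\cap S_r\ne\emptyset$ for all small $r$, so the potential issue you flag does not actually arise.
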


A useful tool to test the $s$-equivalence of two subanalytic sets is introduced in 
the following definition:

\begin{defi}  Let $A$ be  a closed  subanalytic subset
of $\R^n$, $O\in A$. For any real  $ \sigma>1$, 
we will call  \emph{horn-neighbourhood} with center $A$ and exponent $ \sigma$ the set
$$\mathcal H(A,\sigma)= \{ x\in \R^n \ |\  d(x,A)<  \|x\|^\sigma\}.$$ 
\end{defi}

\begin{rem}\label{semihorn} {\rm If $A$ is a closed semialgebraic subset of $\R^n$ and 
$\sigma$ is a rational number, then $\mathcal H(A,\sigma)$ is semialgebraic. Moreover 
if $O$ is isolated in $A$, then $\mathcal H(A,\sigma)$ is empty near $O$.
}\qed
\end{rem}

\begin{prop}\label{horn-lemma} Let $A,B$ be closed subanalytic subsets
of $\R^n$ with $O\in A\cap B$ and let $s \geq 1$. Then
$A\leq_s B$ if and only if there exist real constants $R>0$ and $\sigma >s$  
such that $$(A \setminus \{O\}) \cap B(O,R)\subseteq 
\mathcal H(B,\sigma)$$
where $B(O,R)$ denotes the open ball centered at $O$ of radius $R$.  
\end{prop}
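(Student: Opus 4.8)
The plan is to translate both conditions into statements about the asymptotic behaviour of two subanalytic functions of the single variable $r$, and then compare them. First I would dispose of the degenerate cases. If $O$ is isolated in $A$, then $(A\setminus\{O\})\cap B(O,R)=\emptyset$ for small $R$ and both conditions hold trivially. If $O$ is isolated in $B$ but not in $A$, then $d(x,B)=\|x\|$ for $x$ near $O$, so $\mathcal H(B,\sigma)$ is empty near $O$ (as $\sigma>1$) and the right-hand condition fails; on the other hand $B\cap S_r=\emptyset$ for $r$ small, whence $\delta(B\cap S_r,A\cap S_r)=+\infty$ and $A\leq_s B$ fails too. So from now on $O$ is non-isolated in both $A$ and $B$, and for small $r>0$ I set
\[
g(r)=\sup_{x\in A\cap S_r}d(x,B\cap S_r),\qquad \wt g(r)=\sup_{x\in A\cap S_r}d(x,B).
\]
By the standard stability properties of subanalytic sets and functions these are subanalytic functions of $r$, and $0\le\wt g\le g$. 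By definition $A\leq_s B$ is the condition $g(r)=o(r^s)$, while the inclusion $(A\setminus\{O\})\cap B(O,R)\subseteq\mathcal H(B,\sigma)$ is readily seen to be equivalent to saying that $\wt g(r)\le r^\sigma$ near $0$; so the right-hand condition of the Proposition amounts to saying that $\wt g(r)\le r^\sigma$ near $0$ for some $\sigma>s$.

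Next I would invoke the fact that a subanalytic function of one real variable which is bounded near $0$ is either identically zero near $0$ or equivalent to $c\,r^{\tau}$ for some $c>0$ and some rational $\tau>0$. Applied to $\wt g$ (which is bounded under either hypothesis of the statement), this shows that ``$\wt g(r)=o(r^s)$'' is equivalent to ``$\wt g\equiv0$ near $0$, or $\tau>s$'', hence to ``$\wt g(r)\le r^\sigma$ near $0$ for some $\sigma>s$'', i.e.\ to the right-hand condition. Therefore the Proposition is equivalent to the single equivalence $g(r)=o(r^s)\iff\wt g(r)=o(r^s)$. Since $\wt g\le g$, only the implication from right to left requires an argument, and for it I would prove the sharper statement $g(r)\le C\,\wt g(r)$ for all small $r$ and a suitable constant $C$.

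This comparison is where the geometry of $B$ enters, through the following metric regularity property of the germ of $B$ at $O$: the family of links $\{B\cap S_r\}$ is Lipschitz in $r$ near $0$, i.e.\ there are constants $L>0$ and $R_0>0$ such that $D(B\cap S_r,B\cap S_{r'})\le L\,|r-r'|$ whenever $0<r,r'<R_0$ and $1/2\le r/r'\le2$. Granting this, given $x\in A\cap S_r$ I choose a nearest point $y\in B$, so that $\|x-y\|=d(x,B)$ and $\bigl|\,\|y\|-r\,\bigr|\le\|x-y\|$; since $y\in B\cap S_{\|y\|}$ this gives
\[
d(x,B\cap S_r)\le\|x-y\|+d(y,B\cap S_r)\le d(x,B)+D\bigl(B\cap S_{\|y\|},B\cap S_r\bigr)\le(1+L)\,d(x,B),
\]
and taking the supremum over $x\in A\cap S_r$ yields $g(r)\le(1+L)\,\wt g(r)$. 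To prove the Lipschitz property one uses the local conic structure of $B$ at $O$ together with the curve selection lemma: through a point $y\in B$ close to $O$ there is a subanalytic arc $\gamma$ contained in $B$, joining $O$ to $y$ (and extending slightly beyond $y$ if $r>\|y\|$), and after reparametrising it so that $\|\gamma(u)\|=u$ the arc becomes Lipschitz, because along a subanalytic arc accumulating at $O$ both the radial direction $\gamma(u)/\|\gamma(u)\|$ and the unit tangent vector tend to the limiting tangent direction, so their scalar product is bounded away from $0$. The step I expect to be the main obstacle is to make the Lipschitz constant $L$ uniform as $y\to O$; this is a subanalytic-uniformity statement, which one can obtain either by showing that the locus of points near $O$ at which $B$ carries a tangent direction nearly orthogonal to the radial direction is subanalytic and does not accumulate at $O$, or by applying a \L ojasiewicz inequality to the subanalytic function $(y,r)\mapsto d(y,B\cap S_r)$, which vanishes on $\{r=\|y\|\}$.
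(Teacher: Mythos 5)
Your reduction of the statement to a comparison of the two one-variable subanalytic functions $g(r)=\delta(B\cap S_r,A\cap S_r)$ and $\wt g(r)=\sup_{x\in A\cap S_r}d(x,B)$, together with the Puiseux dichotomy for subanalytic functions of one variable, is sound and is the natural first step (note the paper itself gives no proof of this Proposition; it refers to [FFW2] for all of Section 2). The genuine gap is the key lemma you rely on, namely the \emph{uniform} Lipschitz estimate $D(B\cap S_r,B\cap S_{r'})\leq L|r-r'|$ for all small $r,r'$. You correctly identify the uniformity of $L$ as the obstacle, but neither of your two proposed routes closes it. The \L ojasiewicz route is demonstrably insufficient: applying Proposition \ref{Loj} to $G(y,r)=d(y,B\cap S_r)$ versus $F(y,r)=|\,\|y\|-r\,|$ yields only $G\leq C\,F^{1/\alpha}$ with an \emph{uncontrolled} exponent $1/\alpha\leq 1$; feeding in $F\leq d(x,B)<\|x\|^{\sigma}$ then gives $d(x,B\cap S_r)\lesssim r^{\sigma/\alpha}$, and nothing forces $\sigma/\alpha>s$, so the implication you need does not follow. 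The tangency route (showing that points of $B$ whose tangent directions are nearly orthogonal to the radial direction do not accumulate at $O$) is the right idea on each smooth stratum, but to convert it into a bound on the Hausdorff distance between links you must flow across the singular locus of $B$, i.e.\ you are in effect proving a quantitative (Lipschitz) version of the local conic structure theorem --- a statement considerably stronger and harder than the Proposition itself. Likewise, the arc ``joining $O$ to $y$ and extending slightly beyond $y$'' is not supplied by the curve selection lemma alone (which only gives an arc ending at $y$); extending outward already uses the conic structure.

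There is a standard way to obtain the needed uniformity without proving the Lipschitz link property: apply the curve selection lemma to the \emph{negation}. If $A\leq_s B$ fails while $(A\setminus\{O\})\cap B(O,R)\subseteq\calH(B,\sigma)$, the subanalytic set $\{(r,x): x\in A\cap S_r,\ d(x,B\cap S_r)\geq c\,r^s\}$ accumulates at the origin for some $c>0$, so it contains a subanalytic arc $r\mapsto x(r)$; choosing nearest points $y(r)\in B$ subanalytically, one gets a \emph{single} subanalytic arc in $B$ with $|\,\|y(r)\|-r\,|<r^{\sigma}$, and for a single arc ending at $O$ the Puiseux expansion does give the Lipschitz reparametrization by $\|y\|$ with a constant valid on a whole interval $(0,\eps)$. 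This yields $d(y(r),B\cap S_r)\leq L\,r^{\sigma}=o(r^s)$, contradicting $d(x(r),B\cap S_r)\geq c\,r^s$. In short: your architecture is correct, but the proof as written rests on an unproved (and, via your suggested tools, unprovable) uniform Lipschitz claim; replacing ``uniformity over all $y\in B$'' by a curve-selection argument on the bad set is what actually closes the argument.
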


The following technical result shows that it is possible to modify 
a subanalytic set by means of a suitable horn-neighborhood producing a new  
subanalytic set $s$-equivalent to the original one:

\begin{lem}\label{XY}  Let $X\subset Y \subset \R^n$ be closed subanalytic sets such that $O\in X$ and let $s\ge 1$. Then:
\begin{enumerate}
\item for any $\sigma >s$ we have $Y \seq Y \cup \calH (X,\sigma)$;
\item if $\ol{Y \setminus X} =Y$, there exists $\sigma >s$ such that 
$Y \setminus \calH (X,\sigma) \seq Y.$
\end{enumerate}
\end{lem}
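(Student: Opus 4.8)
The plan is to prove the two statements using Proposition \ref{horn-lemma} (the ``horn lemma'') as the main bridge between $s$-equivalence and the combinatorics of horn-neighbourhoods, together with the elementary observation that $d(\cdot, Z)$ is $1$-Lipschitz in the point and monotone decreasing in $Z$.

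For part (1), since $Y \subseteq Y \cup \calH(X,\sigma)$ we automatically have $Y \leq_s Y \cup \calH(X,\sigma)$, so the content is $Y \cup \calH(X,\sigma) \leq_s Y$. If $O$ is isolated in $Y$, then (as $X \subseteq Y$) $O$ is isolated in $X$ as well, so by Remark \ref{semihorn}-type reasoning — more precisely, directly from the definition of horn-neighbourhood — $\calH(X,\sigma)$ is empty near $O$ and the claim is trivial. Otherwise I would apply Proposition \ref{horn-lemma}: it suffices to exhibit $R>0$ and $\tau > s$ with $\bigl((Y\cup\calH(X,\sigma))\setminus\{O\}\bigr)\cap B(O,R)\subseteq \calH(Y,\tau)$. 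On the $Y$-part this is immediate. For a point $x \in \calH(X,\sigma)$, by definition $d(x,X) < \|x\|^\sigma$, hence $d(x,Y)\le d(x,X) < \|x\|^\sigma$ because $X\subseteq Y$; so $x \in \calH(Y,\sigma)$, and since $\sigma > s$ we may take $\tau = \sigma$ (and any $R$). Thus $Y\cup\calH(X,\sigma) \leq_s Y$, which combined with the reverse inclusion gives $\seq$.

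Part (2) is the substantive half. Again if $O$ is isolated in $Y$ the statement is essentially vacuous (one checks $Y\setminus\calH(X,\sigma)$ still contains $O$ and is $s$-equivalent to $Y$ trivially), so assume $O$ non-isolated in $Y$; the hypothesis $\ol{Y\setminus X} = Y$ guarantees in particular that $O \in \ol{Y\setminus X}$, so $O$ is non-isolated in $Y\setminus X$ and hence removing $\calH(X,\sigma)$ does not delete a neighbourhood of $O$ from $Y$ — in fact we need to check $O \in \ol{Y\setminus \calH(X,\sigma)}$, which follows since points of $Y\setminus X$ far enough (in the horn sense) from $X$ survive. The inclusion $Y\setminus\calH(X,\sigma)\subseteq Y$ gives one direction $Y\setminus\calH(X,\sigma) \leq_s Y$ for free. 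The real work is the other direction $Y \leq_s Y\setminus\calH(X,\sigma)$, i.e. finding, via Proposition \ref{horn-lemma}, some $\sigma > s$ and $R>0$ such that every $x\in (Y\setminus\{O\})\cap B(O,R)$ lies in $\calH(Y\setminus\calH(X,\sigma),\tau)$ for some $\tau>s$: every point of $Y$ near $O$ must be within a horn-distance of the part of $Y$ that we did not throw away. Points of $Y\setminus\calH(X,\sigma)$ are fine. For $x \in Y\cap\calH(X,\sigma)$ one has $d(x,X)<\|x\|^\sigma$, so $x$ is very close to $X$; I would use the curve-selection / \L ojasiewicz-type estimates available for subanalytic sets (or the hypothesis $\ol{Y\setminus X}=Y$ applied pointwise on $X$) to produce a nearby point $y\in Y\setminus X$ with $\|x-y\|$ controlled by a power of $\|x\|$ strictly smaller in order than needed, and then — choosing $\sigma$ large enough — arrange that such $y$ actually lies outside $\calH(X,\sigma)$, so $y \in Y\setminus\calH(X,\sigma)$ and $d(x, Y\setminus\calH(X,\sigma)) \le \|x-y\| \le \|x\|^\tau$ with $\tau > s$.

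The main obstacle is precisely this last estimate in part (2): one must choose the single exponent $\sigma$ simultaneously large enough that the removed horn is ``thin'' compared to the approximation order $s$ we want, yet still be able to find, for each $x$ near $X$, a witness point of $Y\setminus X$ that is both close to $x$ (to within order $>s$) and safely outside $\calH(X,\sigma)$. Quantifying ``how far into $Y\setminus X$ one must go to escape the horn of width $\|x\|^\sigma$'' is a \L ojasiewicz inequality for the subanalytic function measuring distance to $X$ restricted to (a regular part of) $Y$; the condition $\ol{Y\setminus X}=Y$ is what makes this function not identically forced to be small, and the dimensions of $X$ and $Y$ do not matter because the argument is purely metric. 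I would organize the computation by fixing a \L ojasiewicz exponent $\alpha$ governing $d(x,Y\setminus X)$ in terms of $d(x,X)$ for $x\in Y$, and then taking $\sigma > s$ large enough (depending on $\alpha$ and $s$) that $\sigma/\alpha > s$, which yields the required $\tau$.
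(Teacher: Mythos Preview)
The paper does not prove Lemma~\ref{XY}: Section~2 collects background results and explicitly defers all proofs to \cite{FFW-germs}. So there is no in-paper argument to compare against, and I assess your attempt on its own.

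Part (1) is correct and complete.

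Part (2) has the right architecture but a genuine gap in the key estimate. Your closing sentence proposes a \L ojasiewicz inequality ``governing $d(x,Y\setminus X)$ in terms of $d(x,X)$ for $x\in Y$'', but under the hypothesis $\ol{Y\setminus X}=Y$ one has $d(x,\ol{Y\setminus X})=d(x,Y)=0$ identically on $Y$, so that inequality is vacuous. The real difficulty is a \emph{lower} bound: for $x\in Y$ you must exhibit $z\in Y$ with $\|x-z\|\le\|x\|^\tau$ (some $\tau>s$) \emph{and} $d(z,X)\ge\|z\|^\sigma$. Density of $Y\setminus X$ in $Y$ gives arbitrarily close $z\notin X$, but with no control on $d(z,X)$ from below; if you pick $z$ very close to $x$ and $x$ is already inside the horn, $z$ may still lie in $\calH(X,\sigma)$ no matter how large $\sigma$ is.

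One way to close the gap: fix a rational $\tau>s$ and, for $x\in Y$ near $O$, set
\[
N(x)=\max\{\, d(z,X): z\in Y,\ \|z-x\|\le \|x\|^\tau \,\}.
\]
This is subanalytic, and $\ol{Y\setminus X}=Y$ forces $N(x)>0$ for $x\neq O$, so $V(N)=\{O\}$. Proposition~\ref{Loj} applied with $f=N$ and $g=\|\cdot\|$ gives $\|x\|^\alpha\le N(x)$ for some $\alpha>0$. Then any $\sigma>\max(s,\alpha)$ works: a maximising $z$ satisfies $d(z,X)\ge\|x\|^\alpha>\|z\|^\sigma$ near $O$, hence $z\in Y\setminus\calH(X,\sigma)$, and $d(x,Y\setminus\calH(X,\sigma))\le\|x\|^\tau$ with $\tau>s$, which via Proposition~\ref{horn-lemma} gives $Y\le_s Y\setminus\calH(X,\sigma)$. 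Your outline is salvageable along these lines, but as written it does not supply this lower-bound step.
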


Another essential tool will be the following version of \L ojasiewicz' inequality,  
proved in \cite{FFW-semi}; henceforth for any map $f \colon \R^n \to \R^p$ 
we will denote by $V(f)$ the zero-set $f^{-1}(O)$.

\begin{prop}\label{Loj}
Let $A$ be a compact subanalytic subset of
$\R^n$. Assume $f$ and $g$ are subanalytic functions defined on $A$
such that $f$ is continuous, $V(f) \subseteq V(g)$, $g$  is continuous at 
the points of $V(g)$ and such that $\sup |g|<1$. Then there exists
a positive constant $\alpha$ such that $|g|^\alpha \leq |f|$ on $A$ and $|g|^\alpha <
|f|$ on $A\setminus V(f)$.
\end{prop}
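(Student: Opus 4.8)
The plan is to reduce the statement to the classical \L ojasiewicz inequality for \emph{continuous} subanalytic functions on a compact set, and then to use the hypothesis $\sup|g|<1$ in order to dispose of the multiplicative constant that inequality produces.

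First I would circumvent the fact that $g$ need not be continuous away from $V(g)$ by replacing $A$ with the closure of the graph of $|g|$. Since $A$ is compact and $|g|$ is bounded (by $\sup|g|<1$), the graph $\Gamma=\{(x,|g(x)|)\mid x\in A\}\subseteq\R^n\times\R$ is a bounded subanalytic set, so $\ol{\Gamma}$ is a compact subanalytic subset of $A\times[0,\sup|g|]$. Write $p\colon\ol{\Gamma}\to A$ and $q\colon\ol{\Gamma}\to\R$ for the restrictions of the two projections, and set $\tilde f:=|f|\circ p$. Then $q$ and $\tilde f$ are \emph{continuous} subanalytic functions on the compact set $\ol{\Gamma}$. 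The key point is that $V(\tilde f)\subseteq V(q)$: if $(x,t)\in\ol{\Gamma}$ with $\tilde f(x,t)=0$, then $f(x)=0$, hence $x\in V(f)\subseteq V(g)$; writing $(x,t)=\lim_j\bigl(x_j,|g(x_j)|\bigr)$ with $x_j\in A$ and using that $g$ is continuous at $x\in V(g)$, we get $g(x_j)\to g(x)=0$, so $t=0$ and $(x,t)\in V(q)$. This is exactly the step where the continuity of $g$ on $V(g)$ is used, and I expect it to be the only genuinely delicate point.

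Then I would apply the classical \L ojasiewicz inequality to the continuous subanalytic functions $q$ and $\tilde f$ on the compact subanalytic set $\ol{\Gamma}$ (the case $V(\tilde f)=\emptyset$ being trivial, since then $\tilde f$ is bounded below by a positive constant on $\ol{\Gamma}$): it yields constants $C>0$ and $\beta>0$ with $q^\beta\le C\,\tilde f$ on $\ol{\Gamma}$. Restricting to $\Gamma\subseteq\ol{\Gamma}$ and reading off $t=|g(x)|$ gives $|g(x)|^\beta\le C\,|f(x)|$ for every $x\in A$.

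Finally I would absorb the constant $C$ into the exponent using $M:=\sup_A|g|<1$. For $x\in A$ and $\gamma>\beta$ one has $|g(x)|^\gamma=|g(x)|^{\gamma-\beta}|g(x)|^\beta\le M^{\gamma-\beta}|g(x)|^\beta\le C\,M^{\gamma-\beta}|f(x)|$; choosing $\gamma$ so large that $C\,M^{\gamma-\beta}<1$ (possible because $M<1$), we obtain $|g|^\gamma<|f|$ wherever $|f|\neq0$, that is on $A\setminus V(f)$, while on $V(f)\subseteq V(g)$ both sides vanish. Hence $\alpha:=\gamma$ satisfies the conclusion. Everything apart from the passage to $\ol{\Gamma}$ and the inclusion $V(\tilde f)\subseteq V(q)$ is routine bookkeeping.
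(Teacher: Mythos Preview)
The paper does not actually prove this proposition; it only states it and refers the reader to \cite{FFW-semi} (``proved in \cite{FFW-semi}''). So there is no proof in the present paper to compare your attempt against.

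That said, your argument is correct and is a clean way to recover the result from the classical \L ojasiewicz inequality for \emph{continuous} subanalytic functions. The passage to $\ol{\Gamma}$, the closure of the graph of $|g|$, is precisely the right device to handle the fact that $g$ is assumed continuous only at points of $V(g)$: on $\ol{\Gamma}$ both $q$ and $\tilde f=|f|\circ p$ are genuinely continuous, and your verification of $V(\tilde f)\subseteq V(q)$ is where the hypothesis on $g$ enters and is used correctly (if $(x,t)\in\ol{\Gamma}$ with $f(x)=0$, then $x\in V(g)$, continuity of $g$ at $x$ forces $|g(x_j)|\to 0$ along any approximating sequence, hence $t=0$). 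The final step, absorbing the multiplicative constant $C$ into the exponent via $\sup|g|<1$, is routine; you may want to note explicitly that the case $\sup|g|=0$ is trivial so that one may assume $M\in(0,1)$ when choosing $\gamma$.
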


\section{Presentations of semialgebraic sets}\label{reg-pres}

This section is devoted to the first crucial step in our strategy, that is reducing 
ourselves  to prove the main theorem for semialgebraic sets suitably presented.

\begin{defi}\label{good}
Let $A$ be a closed semialgebraic subset of $\R^n$ with 
 $\dim_O A=d>0$.  We will say that 
$A$ admits a \emph{good presentation}  if 
 \begin{enumerate}
 \item[(a)]   the Zariski closure $\ol A^Z$   of  $A$  is irreducible 
 \item[(b)]   there exist generators $ f_1, \ldots, f_p$  of  the ideal $I(\ol A^Z)\subseteq 
 \R[x_1, \ldots, x_n]$ 
  and $h_1, \dots , h_q$ polynomial functions  such that 
$$A=\{x \in \R^n \ |\ f_i(x)=0, h_j(x)\geq 0, i=1, \ldots, p, j=1,\dots,q\}$$
\item[(c)]   $h_i(O)=0$ and $\dim _O (V(h_i) \cap V(f) ) < d$, for each $i$, where 
$f=(f_1, \ldots ,f_p)$.
\end{enumerate}
\end{defi}

\begin{lem}\label{get-good}
Let $A$ be a closed semialgebraic subset of $\R^n$ with 
 $\dim_O A=d>0$. Then   there exist closed 
semialgebraic sets $\Gamma_1, \ldots, \Gamma_r,\Gamma '$ such that
\begin{enumerate}
\item $A= \left(\bigcup_{i=1}^r \Gamma_i\right) \cup \Gamma '$
\item for each $i$, $\dim_O \Gamma_i =d$, and $\dim_O \Gamma ' < d$
\item for each $i$, $\Gamma_i$ admits a good presentation.
\end{enumerate}
\end{lem}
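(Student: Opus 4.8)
The plan is to split $A$ into finitely many basic closed pieces, to cut each of them along the irreducible components of its Zariski closure localised at $O$, and then to discard the defining inequalities that are redundant near $O$. Throughout, the asserted decomposition of $A$ is to be read as an equality of germs at $O$ (equivalently, in a small enough ball $B(O,\rho)$), which is all that is needed for the subsequent localisation at $O$.

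First, by the finiteness theorem for closed semialgebraic sets one writes $A=\bigcup_{k=1}^{N}S_k$ with each $S_k=\{x\in\R^n\mid h_{k,1}(x)\ge0,\dots,h_{k,q_k}(x)\ge0\}$ basic closed. Since $\dim_OA=\max_k\dim_OS_k$ and a closed set not containing $O$ has empty germ there, the pieces $S_k$ with $\dim_OS_k<d$ may be absorbed into $\Gamma'$; so fix $k$ with $\dim_OS_k=d$ (hence $O\in S_k$). The algebraic sets $\ol{S_k\cap B(O,\eps)}^Z$ form a descending chain as $\eps\downarrow0$, which stabilises by Noetherianity to an algebraic set $Z^{(k)}$; as the dimension of a semialgebraic set equals that of its Zariski closure, $\dim Z^{(k)}=\dim_OS_k=d$. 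Decompose $Z^{(k)}=Z^{(k)}_1\cup\dots\cup Z^{(k)}_{m_k}$ into irreducible components. Each $Z^{(k)}_i$ contains $O$: being a component of the Zariski closure of $S_k\cap B(O,\eps)$ for every small $\eps$ it meets that set, hence has $O$ in its Euclidean (so also Zariski) closure. Since $S_k\cap B(O,\eps)\subseteq Z^{(k)}$ for small $\eps$, near $O$ one has $S_k=\bigcup_i\bigl(S_k\cap Z^{(k)}_i\bigr)$.

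Fix a pair $(k,i)$ with $\dim_O(S_k\cap Z^{(k)}_i)=d$, the remaining pairs again being absorbed into $\Gamma'$, and write $Z:=Z^{(k)}_i$. From $d=\dim_O(S_k\cap Z)\le\dim Z\le\dim Z^{(k)}=d$ we get $\dim Z=d$, and since $S_k\cap Z\subseteq Z$ has dimension $\ge d$ with $Z$ irreducible of dimension $d$, both the Zariski closure of $S_k\cap Z$ and its localised version at $O$ equal $Z$. Pick generators $f_1,\dots,f_p$ of the ideal $I(Z)$, so $V(f)=Z$ with $f=(f_1,\dots,f_p)$, set $L=\{\,l\mid h_{k,l}(O)=0\ \text{and}\ h_{k,l}\notin I(Z)\,\}$, and define
$$\Gamma_{k,i}=\{x\in\R^n\mid f_1(x)=\dots=f_p(x)=0,\ h_{k,l}(x)\ge0\ \text{for all }l\in L\}.$$
This is a closed semialgebraic set with the same germ at $O$ as $S_k\cap Z$: an inequality with $h_{k,l}(O)>0$ holds on a neighbourhood of $O$, one with $h_{k,l}\in I(Z)$ holds on $V(f)=Z$, and no $h_{k,l}(O)$ is negative since $O\in S_k$. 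Hence $\Gamma_{k,i}$ has the same localised Zariski closure $Z$ at $O$ as $S_k\cap Z$, and being contained in $V(f)=Z$ it satisfies $\ol{\Gamma_{k,i}}^Z=Z$, which is irreducible; this gives \textrm{(a)}, and the $f_j$ generate $I(\ol{\Gamma_{k,i}}^Z)$, which together with the displayed description gives \textrm{(b)}. For \textrm{(c)}: each $l\in L$ has $h_{k,l}(O)=0$, and $h_{k,l}\notin I(Z)$ with $Z$ irreducible of dimension $d$ forces $V(h_{k,l})\cap Z$ to be a proper, hence lower-dimensional, subvariety of $Z$, so $\dim_O\bigl(V(h_{k,l})\cap V(f)\bigr)<d$. (If $L=\varnothing$, then $\Gamma_{k,i}=Z$ is algebraic; adjoining the single inequality $x_j^2\ge0$ for a coordinate $x_j\notin I(Z)$—one exists since $\dim Z>0$—restores a good presentation with one inequality.)

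Taking $\Gamma_1,\dots,\Gamma_r$ to be all the $\Gamma_{k,i}$ so obtained (there is at least one, as $\dim_OA=d$) and $\Gamma'$ the union of all the discarded $S_k$ and $S_k\cap Z^{(k)}_i$, one has $\dim_O\Gamma'<d$, each $\Gamma_i$ has $\dim_O\Gamma_i=d$ and admits a good presentation, and near $O$ indeed $A=\bigcup_kS_k=\bigl(\bigcup_{i=1}^r\Gamma_i\bigr)\cup\Gamma'$. The step demanding the most care is the passage from $S_k\cap Z^{(k)}_i$ to $\Gamma_{k,i}$: one must delete exactly the inequalities that are redundant near $O$—no fewer, or \textrm{(c)} fails, and no more, or the germ at $O$ changes—and then verify that this operation leaves the Zariski closure equal to the prescribed irreducible component, which is precisely what makes \textrm{(a)} and \textrm{(c)} hold simultaneously.
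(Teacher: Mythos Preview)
Your proof is correct and follows essentially the same route as the paper's: split $A$ into pieces whose Zariski closure is irreducible of dimension $d$, then prune from each presentation the inequalities $h_j\ge 0$ with $h_j$ vanishing identically on the piece (your condition $h_{k,l}\in I(Z)$), together with those satisfying $h_j(O)>0$. The only difference is that the paper outsources the first step to \cite[Lemma~3.2]{FFW-semi} while you carry it out explicitly via the finiteness theorem and the stabilised Zariski closure $Z^{(k)}=\bigcap_{\eps}\ol{S_k\cap B(O,\eps)}^Z$; the pruning step that secures condition~(c) is identical in both arguments.
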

\begin{proof}
Arguing as in \cite[Lemma 3.2]{FFW-semi}  in the semialgebraic setting, there exist 
semialgebraic sets $\Gamma_1, \ldots, \Gamma_r,\Gamma '$ fulfilling conditions (1) 
and (2) of the thesis and such that, for each $i$,  $\Gamma_i$ 
admits a presentation satisfying 
conditions (a) and (b) of Definition \ref{good}. In order to achieve also condition (c) 
it suffices to drop from the presentation of each $\Gamma _i$ all the  
inequalities $h_j(x)\geq 0$ such that $h_j$ vanishes identically on $\Gamma _i$.
\end{proof}

Since we are interested in preserving dimension, we will reduce ourselves to work with 
a set presented by as many polynomial equations as its  codimension and with 
the critical locus of the associated polynomial map nowhere dense.

\begin{notation} \label{sigmanot} Let $\Omega $ be an open subset of $\R^n$.
For any smooth $\varphi \colon \Omega \to \R^p$, denote  
$\Sigma_r(\varphi)=\{x\in \Omega \ |\ \rk\ d_x \varphi <r\}$ and $\Sigma(\varphi)=\Sigma_p(\varphi)$.
\end{notation}

\begin{defi}\label{regular}
Let $A$ be a closed semialgebraic subset of $\R^n$ with 
 $\dim_O A=d>0$. We will say that 
$A$ admits a \emph{regular presentation}  if  there exist a polynomial map 
$F\colon \R^n \to \R^{n-d}$ and polynomial functions 
$h_1, \ldots, h_q$  such that 
 \begin{enumerate}
 \item[(a)]   $A=\{x \in \R^n \ |\ F(x)=  0, h_j(x)\geq 0, \ j=1,\dots,q\}$, 
 \item[(b)]   $\dim _O (\Sigma (F) \cap A) <d$
 \item[(c)]    $h_i(O)=0$ and $\dim _O (V(h_i) \cap A ) < d$, for each $i$.
 \end{enumerate}
\end{defi}

A useful tool to pass from a good presentation to a regular one will be 
the following result (for a proof see for instance 
\cite[Proposition 7.7.10]{BCR}):

\begin{lem}\label{sign} Let $A$ be a closed semialgebraic subset of $\R^n$ and let 
$h,g$ polynomial functions on $\R^n$. Then there exist polynomial functions 
$\varphi, \psi$ with $\varphi >0$ and $\psi\geq 0$ such that 
\begin{enumerate}
\item $sign(\varphi h + \psi g) =sign (h)$ on $A$
\item $V(\psi) \subseteq \ol{V(h)\cap A}^Z$.
\end{enumerate}
\end{lem}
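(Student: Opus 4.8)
\noindent\emph{Proof strategy.} The plan is to take for $\psi$ a nonnegative polynomial vanishing exactly on the real Zariski closure $W:=\ol{V(h)\cap A}^Z$, and then to pick $\varphi$ so large that the correction term $\psi g$ cannot affect the sign of $h$ along $A$. Since $\R[x_1,\dots,x_n]$ is Noetherian, I would fix generators $\rho_1,\dots,\rho_k$ of the ideal $I(W)$ and set $\psi_0:=\rho_1^2+\dots+\rho_k^2\ge 0$. Because $W$ is a real algebraic set, $W=V(I(W))$, so $V(\psi_0)=\bigcap_i V(\rho_i)=W$; in particular $\psi_0$ vanishes on $V(h)\cap A\subseteq W$.

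Next I would compare $h$ with $\psi_0$ on $A$ using a global form of the \L ojasiewicz inequality: since $V(h)\cap A\subseteq V(\psi_0)$, there exist $N,M\in\N$ and $c>0$ such that
\[
\psi_0(x)^{N}\ \le\ c\,(1+\|x\|^2)^{M}\,|h(x)|\qquad\text{for every }x\in A
\]
(this can be obtained from Proposition \ref{Loj} by a standard compactification of $\R^n$, e.g.\ via inverse stereographic projection, the factor $(1+\|x\|^2)^{M}$ accounting for the behaviour of $A$ at infinity). I then set
\[
\psi:=\psi_0^{\,N},\qquad \varphi:=1+c\,(1+\|x\|^2)^{M}\bigl(1+g^2\bigr).
\]
These are polynomial functions with $\psi\ge 0$, $\varphi>0$ on $\R^n$, and $V(\psi)=V(\psi_0)=W=\ol{V(h)\cap A}^Z$, which is (2). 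For (1): at a point $x\in V(h)\cap A$ both $h$ and $\psi$ vanish, so $(\varphi h+\psi g)(x)=0=\operatorname{sign}h(x)$; at a point $x\in A\setminus V(h)$ the displayed inequality, together with $1+g^2\ge|g|$, gives $\psi\,|g|/|h|<\varphi$ there, hence, using $\psi\ge 0$,
\[
\bigl(\varphi h+\psi g\bigr)h=\varphi h^2+\psi gh\ \ge\ \varphi h^2-\psi|g|\,|h|\ =\ |h|\bigl(\varphi|h|-\psi|g|\bigr)\ >\ 0,
\]
so $\varphi h+\psi g$ has the same sign as $h$ at $x$.

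\noindent\emph{Main obstacle.} The real work lies in the uniform comparison of $h$ with the auxiliary polynomial $\psi_0$ over all of $A$: near $V(h)\cap A$ one must keep the quotient $\psi_0^{N}|g|/|h|$ bounded, which is a local \L ojasiewicz estimate forcing $N$ to be large, while at the same time — since $A$ need not be compact — one has to control the growth of this quotient at infinity. This is exactly why a global version of the inequality, with the corrective factor $(1+\|x\|^2)^{M}$, is needed rather than the compact statement of Proposition \ref{Loj}.
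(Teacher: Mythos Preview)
The paper does not supply its own proof of this lemma; it simply cites \cite[Proposition~7.7.10]{BCR}. Your argument is correct and is essentially the standard proof one finds there: choose $\psi$ as a sum of squares of generators of $I\bigl(\ol{V(h)\cap A}^Z\bigr)$, invoke a global \L ojasiewicz inequality on the closed semialgebraic set $A$ to dominate $\psi^N$ by $c(1+\|x\|^2)^M|h|$, and absorb $|g|$ into $\varphi$ via $|g|\le 1+g^2$. The sign computation and the verification of (2) are clean.

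Two small remarks worth making explicit if you write this up. First, the version of \L ojasiewicz you need is genuinely the global one with integer exponents; you can obtain it directly from \cite[Proposition~2.6.4]{BCR} (there exist an integer $N$ and a continuous semialgebraic $\theta$ on $A$ with $\psi_0^N=\theta\,h$, and then bound $|\theta|$ by $c(1+\|x\|^2)^M$ using polynomial growth of semialgebraic functions), rather than by compactifying and invoking the compact Proposition~\ref{Loj} of the present paper, which only yields a real exponent. Second, note that your $\psi$ in fact satisfies $V(\psi)=\ol{V(h)\cap A}^Z$, which is stronger than the stated inclusion; this is harmless but worth saying.
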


\begin{prop}\label{reduction} Let $A$ be a closed semialgebraic subset of $\R^n$ 
with  $\dim_O A=d>0$ which admits a good presentation. Let $s>1$. Then
there exists a closed semialgebraic subset  $\widetilde A$   of $\R^n$ with  
$\dim_O \widetilde A=d>0$ such that 
 \begin{enumerate}
 \item $\widetilde A$   admits a regular presentation
 \item $\widetilde A \seq A$. 
 \end{enumerate}
 \end{prop}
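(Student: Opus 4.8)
The plan is to start from a good presentation $A=\{x\in\R^n \mid f_i(x)=0,\ h_j(x)\ge 0\}$ with $\ol A^Z$ irreducible of dimension $d$, so that $\mathrm{codim}\,\ol A^Z = n-d$. The first goal is to replace the possibly-too-many equations $f_1,\dots,f_p$ by exactly $n-d$ equations. Since $V(f)=\ol A^Z$ has dimension $d$, a generic $\R$-linear combination $F=(F_1,\dots,F_{n-d})$ of the $f_i$'s cuts out, near $O$, a set whose $d$-dimensional part is exactly $\ol A^Z$: indeed $V(F)\supseteq \ol A^Z$ always, and for generic coefficients the extra components of $V(F)$ have dimension $< d$ near a generic point — but to control things at the fixed point $O$ one instead argues that $V(F)$ and $\ol A^Z$ agree on a Zariski-dense open subset of $\ol A^Z$, and uses a horn-neighbourhood surgery (Lemma \ref{XY}) to excise the low-dimensional discrepancy: set $Z = \ol{V(F)\setminus \ol A^Z}$, which has $\dim_O Z<d$, and replace the equation set appropriately. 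Concretely I expect $\wt A$ to be built so that $\ol{\wt A}^Z$ still has the $f_i$-zero-set as its top-dimensional component while being cut out by $n-d$ polynomials; the passage $\wt A\seq A$ at this stage is governed by Proposition \ref{horn-lemma} together with the fact that the symmetric difference sits in a set of dimension $<d$, which is automatically $s$-negligible for $s>1$ after intersecting with a small ball (using part (1) or (2) of Lemma \ref{XY} to absorb or remove it via a horn-neighbourhood of a lower-dimensional set).

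Next I would arrange condition (b), namely $\dim_O(\Sigma(F)\cap A)<d$. The critical set $\Sigma(F)=\{x\mid \rk\,d_xF<n-d\}$ is an algebraic subset of $V(F)$; since $\ol A^Z$ is irreducible of dimension $d$ and $F$ generically has maximal rank $n-d$ on the smooth $d$-dimensional locus of $\ol A^Z$ (here one must check that a generic linear combination $F$ of generators of $I(\ol A^Z)$ is a submersion onto $\R^{n-d}$ at a generic smooth point of $\ol A^Z$ — true because the differentials of the $f_i$ span the conormal space there), the intersection $\Sigma(F)\cap \ol A^Z$ is a proper algebraic subset of $\ol A^Z$, hence of dimension $<d$; a fortiori $\dim_O(\Sigma(F)\cap A)<d$.

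Then I would handle the inequalities, to secure condition (c), $\dim_O(V(h_i)\cap A)<d$ for each $i$. Condition (c) of the good presentation already gives $\dim_O(V(h_i)\cap V(f))<d$; but after replacing $V(f)$ by $V(F)$ (which is larger) this may fail on the spurious low-dimensional part, so one replaces each $h_i$ using Lemma \ref{sign}: choose $\varphi_i>0,\ \psi_i\ge 0$ with $\mathrm{sign}(\varphi_i h_i+\psi_i g_i)=\mathrm{sign}(h_i)$ on $A$ (so the set $A$ itself is unchanged) and $V(\psi_i)\subseteq \ol{V(h_i)\cap A}^Z$, with $g_i$ chosen so that $V(\varphi_i h_i+\psi_i g_i)\cap V(F)$ is pinched back down to $\ol{V(h_i)\cap A}^Z$, which has dimension $<d$. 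One also retains $h_i(O)=0$; if some original $h_j$ does not vanish at $O$ it is positive near $O$ and can simply be dropped without changing the germ of $A$ at $O$. Finally, $\dim_O\wt A=d$ is preserved throughout because $\wt A\seq A$ and $s$-equivalence preserves local dimension (a set $s$-equivalent to a $d$-dimensional germ is itself $d$-dimensional, as it must contain points arbitrarily close to $O$ on spheres where $A$ does).

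\textbf{Main obstacle.} The delicate point is the genericity/transversality argument showing that a single choice of $n-d$ generic linear combinations $F$ of the $f_i$ simultaneously (i) has $\ol A^Z$ as the top-dimensional component of $V(F)$, (ii) has critical locus meeting $\ol A^Z$ in dimension $<d$, and (iii) is compatible, after the Lemma \ref{sign} adjustment of the $h_i$, with condition (c) — all while the spurious low-dimensional pieces of $V(F)$ are controlled well enough near the \emph{fixed} point $O$ (not just a generic point) that the horn-neighbourhood surgery of Lemma \ref{XY} yields $\wt A\seq A$. Keeping the bookkeeping of which auxiliary set is intersected with $B(O,R)$ and which horn-neighbourhood (of a dimension $<d$ set, hence harmless for any $s$) is added or removed is where the care is needed; everything else is a routine application of \L ojasiewicz (Proposition \ref{Loj}) and the basic $s$-equivalence calculus of Section 2.
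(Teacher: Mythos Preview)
Your overall architecture---generic linear combinations $F=\pi\circ f$ to get $n-d$ equations, then adjust the $h_j$ via Lemma~\ref{sign}---matches the paper. The handling of condition~(b) (rank of $dF$ generically $n-d$ on the smooth locus of $\ol A^Z$) and of condition~(c) (sign lemma with $g=\|f\|^2$) are essentially what the paper does.

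The gap is precisely where you flag it, and your proposed resolution does not work. You assert that $Z=\ol{V(F)\setminus\ol A^Z}$ has $\dim_O Z<d$. But the transversality argument you (and the paper) use shows that $F$ is a \emph{submersion} on $V(F)\setminus V(f)$; hence $V(F)\setminus V(f)$, where nonempty, is a smooth $d$-manifold, and if its closure reaches $O$ then $\dim_O Z=d$. Nothing you say rules this out for every generic $\pi$. Moreover, even if $\dim_O Z<d$ held, a low-dimensional symmetric difference does \emph{not} force $s$-equivalence: take $A$ a plane through $O$ in $\R^3$ and $\wt A=A\cup(\text{a transversal line})$; the difference is $1$-dimensional yet $\wt A\not\leq_s A$ for any $s$. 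Lemma~\ref{XY} does not help here, since the extra piece is not contained in $A$. Finally, horn-neighbourhood surgery yields semialgebraic sets, not sets presented by polynomial inequalities, so it cannot directly produce a regular presentation in the sense of Definition~\ref{regular}.

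The paper's missing idea is to \emph{add one more polynomial inequality} to the presentation: set
\[
\wt A_m=\{F=0,\ \|x\|^{2m}-\|f(x)\|^2\ge 0,\ h_i'\ge 0\}.
\]
Then $A\subseteq\wt A_m\subseteq V(F)$, so $\dim_O\wt A_m=d$ and $A\leq_s\wt A_m$ trivially. For the reverse, on $\Lambda=\{h_i'\ge 0\}$ one has $V(\|f\|)\cap\Lambda=A=V(d(\cdot,A))\cap\Lambda$, so \L ojasiewicz (Proposition~\ref{Loj}) gives $d(x,A)^\tau<\|f(x)\|$ off $A$; combined with $\|f(x)\|\le\|x\|^m$ on $\wt A_m$ this yields $d(x,A)<\|x\|^{m/\tau}$, hence $\wt A_m\leq_s A$ once $m>s\tau$. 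For $m$ large the new inequality also satisfies condition~(c), since $V(F)\cap\{\|x\|^{2m}=\|f\|^2\}$ is eventually of dimension $<d$. This single extra inequality is what replaces your attempted excision of $Z$, and it does so while keeping the presentation polynomial.
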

 
\begin{proof} 

By hypothesis, 
we have that 
$$A=\{x \in \R^n \ |\ f(x)=O, h_j(x)\geq 0, j=1,\dots,q\}$$ with $f=(f_1,\dots, f_p)$ 
such that $V(f)$ is irreducible, $V(f)=\ol A^Z$  and $f_1,\dots, f_p$ generate the ideal 
$I(V(f))$. 
In particular   $\dim _O(\Sigma_{n-d}(f) \cap V(f))<d$ \ 
(see for instance \cite[Definition 3.3.3]{BCR}). 

If $p=n-d$, we have the thesis with $\widetilde A=A$; thus let $p> n-d$. 

Denote by $\Pi$ the set of surjective linear maps from $\R^p$ to $\R^{n-d}$ and consider 
the smooth map $\Phi \colon (\R^n-V(f) ) \times \Pi \to \R^{n-d}$ defined by 
$\Phi (x, \pi) = (\pi \circ f) (x)$ for all $x\in \R^n-V(f) $ and $\pi \in \Pi$.

The map $\Phi$ is transverse to $\{O\}$: namely the partial Jacobian matrix of $\Phi$ with respect to the variables in $\Pi$ (considered as an open subset of 
$\R^{p(n-d)}$) is the $(n-d) \times p(n-d)$ matrix

$$ \bmatrix f(x)& O&O &\dots & O\\ O&f(x)&O&\dots &O  \\ \vdots \\O&O&O& \dots &f(x)\endbmatrix;$$
thus, for all $x\in \R^n-V(f)$ and for all $\pi \in \Pi$ the Jacobian matrix of
$\Phi$ has rank $n-d$. 

As a consequence, by a well-known result of singularity theory (see for instance \cite[Lemma 3.2]{Bruce-Kirk}), 
we have that the map $\Phi_{\pi} \colon  \R^n-V(f)   \to \R^{n-d}$ defined by 
$\Phi_{\pi} (x)= \Phi (x, \pi)=(\pi \circ f)(x)$ is transverse to $\{O\}$ for all $\pi$ outside a set $\Gamma \subset \Pi$ of measure zero and hence $\pi \circ f$ is a submersion on 
$V(\pi \circ f)\setminus V(f)$ for all such $\pi$.

Let $x\in V(f)$ be a point at which $f$ has rank $n-d$. 
Then there is an open dense set $U\subset \Pi$ such that  for all $\pi \in U$ the map 
$\pi \circ f$ is a submersion at  $x$, and hence off some subvariety of $V(f)$ 
of dimension less than $d$.

Thus, if we choose $\pi_0\in (\Pi\setminus \Gamma)\cap U$, the map 
$F= \pi_0 \circ f
\colon \R^n \to \R^{n-d}$ satisfies the following properties: 
 \begin{itemize}
\item $\dim_O V(F)=\dim_O V(f)=d$,
\item $\Sigma(F) \cap V(F)\subseteq V(f)\subseteq V(F)$, 
\item $\dim _O (\Sigma(F) \cap V(F))<d$.
\end{itemize}
\medskip

We want to show that there exist polynomials $h_i'$ such that 
\begin{itemize}
\item $A=\{x \in \R^n \ |\ f(x)=O, h_i'(x)\geq 0, i=1,\dots,q\}$
\item $\dim _O (V(F) \cap  \bigcup_{i=1}^q V(h_i'))<d. $
\end{itemize}

Namely for each  $i\in \{1, \ldots, q\}$ denote by $W_i$ the union 
of the irreducible components $Y$ of 
$V(F)$ such that $\dim _O (V(h_i)\cap Y) <d$ ;  let also 
$T_i= \ol{V(F)\setminus W_i}^Z$.  
Note that $V(f) \subseteq W_i$.

If we apply Lemma \ref{sign} 
choosing $h=h_i$ and $g=  \|f\|^2$ on $W_i$, then there exist $\varphi, \psi$ with 
$\varphi >0$ and $\psi\geq 0$ such that the function $h'_i = \varphi h_i + \psi \|f\|^2$ 
has the same sign as $h_i$ on $W_i$ 
and $V(\psi) \subseteq \ol{V(h_i)\cap W_i}^Z$. 
Then 
\begin{itemize}
\item $V(h_i')\cap W_i = V(h_i)\cap W_i$  
\item since $h_i' |_{T_i} =  (\psi \|f\|^2 )|_{T_i}$, then  $V(h_i')\cap T_i = 
(V(\psi) \cap T_i)  \cup (V(f) \cap T_i) \subseteq W_i \cap T_i$. 
\end{itemize}
Thus 
 $\dim _O ( V(h_i')\cap V(F)) <d$ for any $i$ and $$A=\{x \in \R^n \ |\ f(x)=O, h_i'(x)\geq 0, \ 
 i=1,\dots,q\}.$$

\medskip

For each $m \in \N$ denote 
\begin{equation}\label{Am}
\widetilde A_m=\{x \in \R^n \ |\ F(x)=0,  \|x\|^{2m} - \|f(x)\|^2 \geq 0, 
h_i'(x)\geq 0, \ i=1,\dots,q\}.\end{equation}

Since $A \subseteq  \widetilde A_m \subseteq V(F)$, then $\dim _O \widetilde A_m =d$.

We claim that there exists $m$ such that $\widetilde A_m \sim_s A$. 
Since $A \subseteq  \widetilde A_m$, we trivially have that $A \leq_s 
\widetilde A_m$ for any $m$. Thus it  is sufficient to prove that there exists 
$m$ such that $\widetilde A_m  \leq_s A$.
Namely, let $\Lambda =\{x \in \R^n \ |\ h_i'(x)\geq 0, i=1,\dots,q\}$.  
Since $V(\|f\|)\cap \Lambda=A
=V(d(x,A) )\cap \Lambda $, by Proposition \ref{Loj}  there exist a rational number 
$\tau$ and a real number $R>0$ such that 
$$d(x,A)^\tau < \|f(x)\| \qquad \forall x\in (\Lambda\setminus V(f))\cap B(O,R)=
(\Lambda\setminus A)\cap B(O,R).$$
Let $m >  s\tau$. Then $d(x,A) < \|f(x)\|^{\frac 1\tau} \leq \|x\| ^ {\frac m\tau}$ for all 
$x\in (\widetilde A_m\setminus A) \cap B(O,R)$. This implies that 
$(\widetilde A_m \setminus \{O\}) 
\cap B(O,R) \subseteq \mathcal H(A,\frac m\tau)$ 
and hence, by Proposition \ref{horn-lemma}, $\widetilde A_m  \leq_s A$. 

Up to increasing $m$, we can also assume that 
$\dim _O (V(F) \cap V(\|x\|^{2m} - \|f(x)\|^2 ))<d$ and hence  that (\ref{Am}) 
is a a regular presentation of $\widetilde A_m$.

It is thus sufficient to choose $m$ as above and  $\widetilde A =\widetilde A_m$. 

\end{proof}

\section{Main result}

Since $s$--equivalence depends only on the germs at $O$,
we are allowed to identify a subanalytic set
with a realization of its germ at the origin in a suitable ball $B(O,R)$. Henceforth 
we will even omit to explicitely indicate the intersection of our sets with $B(O,R)$; 
in particular, given two sets $U$ and $U'$,  when we write that $U \subseteq U'$ 
we mean that $U\cap B(O,R) \subseteq U'$ for a suitable real constant $R>0$.

\begin{teo}\label{preserve-dim}
 For any real number
$s\geq 1$ and for any closed semialgebraic set $A \subset \R^n$ of
codimension $\geq 1$ with $O\in A$, there exists an algebraic subset
$S$ of $\R^n$ such that $A \seq S$ and $\dim_O S= \dim_O A$.
\end{teo}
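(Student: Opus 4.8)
The plan is to reduce, via the results of Section~\ref{reg-pres}, to the case of a set $A$ admitting a \emph{regular} presentation, and then to build the approximating algebraic set by the same kind of ``collapsing'' trick as in the motivating example of the introduction, but applied to the regular presentation so that dimension is controlled. Concretely: by Lemma~\ref{get-good} we may write $A = (\bigcup_i \Gamma_i)\cup\Gamma'$ with each $\Gamma_i$ of local dimension $d=\dim_O A$ admitting a good presentation and $\dim_O\Gamma' < d$; by Proposition~\ref{reduction} each $\Gamma_i$ is $s$-equivalent to a $\widetilde\Gamma_i$ of the same local dimension with a regular presentation. By induction on dimension one handles $\Gamma'$, and by Proposition~\ref{union} it suffices to produce, for each regularly presented set, an algebraic $s$-approximant of the same local dimension, and then take the union. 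So the heart of the matter is the regularly presented case.

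So assume $A=\{x\in\R^n \mid F(x)=0,\ h_j(x)\ge 0,\ j=1,\dots,q\}$ with $F\colon\R^n\to\R^{n-d}$ polynomial, $\dim_O(\Sigma(F)\cap A)<d$, and each $h_j(O)=0$ with $\dim_O(V(h_j)\cap A)<d$. The idea is to replace each inequality $h_j\ge 0$ by folding it into the equation, using odd powers as in the introduction, but now added to $F$ rather than destroying it. A natural candidate is
\begin{equation*}
S = V\bigl(F(x),\ \|F(x)\|^2 + \textstyle\sum_{j=1}^q (\,|h_j(x)| - h_j(x)\,)\,\|x\|^{m}\bigr)
\end{equation*}
or, more in the spirit of polynomial equations, to handle the inequalities one at a time: first approximate $\{F=0, h_1\ge 0\}$ by adding a single polynomial equation such as $g_1^2 - h_1^{m}=0$ where $g_1$ is chosen to cut $V(F)$ down appropriately, then iterate. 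The key point is that since $A$ already lies in $V(F)$, which has local dimension exactly $d$, and since the locus where $F$ fails to be a submersion meets $A$ in dimension $<d$, adding one more equation to $F$ that is ``generically independent'' of $F$ along $V(F)$ cannot raise the dimension: the resulting variety is locally contained in $V(F)$ and is a proper subvariety of it near $O$ in the relevant components, hence still has local dimension $\le d$; and it contains (a horn-neighborhood modification of) $A$, so its local dimension is exactly $d$. The regularity conditions (b) and (c) are precisely what guarantees this ``generic independence'' and the dimension bounds on the bad loci.

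For the $s$-equivalence itself I would argue exactly as in the proof of Proposition~\ref{reduction}: one shows $A\subseteq S$ up to a horn-neighborhood (so $A\le_s S$ for free, or after a harmless enlargement via Lemma~\ref{XY}(1)), and conversely, using the \L ojasiewicz inequality of Proposition~\ref{Loj} with $f$ a suitable function vanishing exactly on $A$ inside $V(F)\cap\{h_j\ge 0\ \forall j\}$ and $g$ the distance to $A$, one finds that for $m$ large enough every point of $S\setminus A$ near $O$ lies in a horn-neighborhood $\calH(A,\sigma)$ with $\sigma>s$, whence $S\le_s A$ by Proposition~\ref{horn-lemma}. Combining, $S\seq A$. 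Finally one takes the Zariski closure / finite union of the pieces and checks it is genuinely algebraic of local dimension $d$; by Remark~\ref{semihorn} and the codimension $\ge1$ hypothesis the degenerate cases ($O$ isolated, etc.) are dealt with separately.

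The main obstacle I anticipate is the bookkeeping that makes the ``collapsing'' operation preserve dimension when there are \emph{several} inequalities $h_1,\dots,h_q$: collapsing them one at a time changes the set at each stage, and one must check that after each step the new set still admits (up to $s$-equivalence) a presentation to which the next collapse can be applied without dimension jump — essentially re-establishing the regularity-type hypotheses, or at least the dimension bounds on $V(h_{j+1})\cap(\text{current set})$ and on the critical locus of the accumulated equation map. Choosing the auxiliary polynomials (the analogues of $g_j$ above, or suitable multipliers as in Lemma~\ref{sign}) so that the intersection with $V(F)$ stays of dimension $<d$ at each stage, uniformly in the large exponent $m$, is the delicate quantitative point; everything else is a fairly direct reprise of the horn-neighborhood and \L ojasiewicz machinery already set up.
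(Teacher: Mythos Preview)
Your overall reduction---induction on $d$, Lemma~\ref{get-good}, Proposition~\ref{reduction}, Proposition~\ref{union}---is exactly the paper's, and you correctly locate the difficulty in maintaining regularity when collapsing the inequalities one by one. But two genuine gaps remain in the core step.

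First, the dimension control is set up backwards. You describe the step as \emph{adding} an equation $g_1^2-h_1^m=0$ to $F$, so that the new set is ``locally contained in $V(F)$'' and hence of dimension at most $d$. But a generically independent extra equation on the $d$-dimensional $V(F)$ gives dimension $d-1$, not $d$, and your lower bound ``it contains (a horn-neighbourhood modification of) $A$'' is false: on $A$ one has $g_{i+1}=g_i^2-h_{i+1}^m=-h_{i+1}^m<0$ wherever $h_{i+1}>0$. The paper does the opposite: it \emph{replaces} the first component of $F$. With $g_0=f_1$ and $g_{i+1}=g_i^2-h_{i+1}^m$, one keeps $F_{i+1}=(g_{i+1},f_2,\dots,f_{n-d})$ a map to $\R^{n-d}$; the set $A_q=V(F_q)$ is \emph{not} contained in $V(F_0)$, and what pins its local dimension to $d$ is that the rank condition $\Sigma(F_{i+1})\cap A_{i+1}\cap K\subseteq\{O\}$ (property P3) is re-proved at each stage by a separate \L ojasiewicz estimate comparing $\|\nabla g_i\wedge\nabla f_2\wedge\dots\wedge\nabla f_{n-d}\|$ with a power of $h_{i+1}$, which forces $m$ even larger. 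The final approximant is then $\ol{A_q\cap K}^Z$.

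Second, and more seriously, the inequality $A_i\cap K\leq_s A_{i+1}\cap K$ is not ``for free'': since neither set contains the other, both directions require work. The direction $A_{i+1}\cap K\leq_s A_i\cap K$ goes roughly as you say, via \L ojasiewicz. The reverse direction is the technical heart of the proof and needs an idea you do not have: one works inside the ambient $(d{+}1)$-dimensional set $B_i=V(f_2,\dots,f_{n-d})\cap\Lambda_i\cap K$, observes that $g_{i+1}<0$ at each $x\in A_i\cap K\setminus\{O\}$ while (after another application of Lemma~\ref{XY} and \L ojasiewicz, again enlarging $m$) $g_{i+1}>0$ on a nearby subset $K'\subseteq B_i$, and then applies the Intermediate Value Theorem along geodesics of $B_i$---using the \L ojasiewicz comparison $d_g(y_1,y_2)\le C\|y_1-y_2\|^\alpha$ between geodesic and Euclidean distance---to produce a zero $w$ of $g_{i+1}$ in $B(x,\|x\|^{\eta\alpha})\cap B_i$; one then checks $w\in A_{i+1}\cap K$ using the earlier ``Claim'' that $h_j>0$ on such balls for $j>i$. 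None of the pieces of this sign-change/IVT argument on the $(d{+}1)$-manifold is visible in your sketch.
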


\begin{proof} 

We will prove the thesis by induction on $d= \dim _O A$.

If  $d=0$ the result holds trivially. So let $d\geq 1$ and assume that the result holds 
for all semialgebraic sets of dimension less that $d$.

By Lemma \ref{get-good},  there exist closed 
semialgebraic sets $\Gamma_1, \ldots, \Gamma_r,\Gamma '$ such that
\begin{enumerate}
\item $A= \left(\bigcup_{i=1}^r \Gamma_i\right) \cup \Gamma '$
\item for each $i$, $\dim_O \Gamma_i =d$ and $\Gamma_i$ admits a good presentation
\item $\dim_O \Gamma ' < d$.
\end{enumerate}

By Proposition \ref{union}, by Proposition \ref{reduction} and by the inductive hypothesis 
we can assume that $A$ is described by means of a regular presentation as
$$A=\{x \in \R^n \ |\ F_0(x)=O, h_j(x)\geq 0, j=1,\dots,q\}$$
with $F_0=(f_1,\dots, f_{n-d})$. 
We can assume $q\geq 1$, because otherwise there is nothing to prove.

We will use the following notation:
\begin{itemize}
\item $Z_i=\bigcup_{j=i+1}^q V(h_j)$ for $i=0, \ldots, q-1$, \ and $Z_q=\emptyset$,
\item $X= (\Sigma(F_0) \cup Z_0)\cap A$,
\item $\wt f =(f_2,\dots, f_{n-d})\colon \R^n \to \R^{n-d-1}$ and $V=V(\wt f)$,
\item $\Lambda _i=\{x \in \R^n \ |\ h_j(x)\geq 0, \ j=i+1,\dots,q\}$ for any $i=0, \ldots, q-1$, \  and 
$\Lambda _{q}= \R^n$.
\end{itemize}

Since the presentation of $A$ is regular,  we have that 
$$\dim _O(\Sigma(F_0) \cap A)<d \quad\mbox{  and }  \quad \dim _O(Z_0 \cap A) <d.$$

Let $X_1= X \cap \ol{A \setminus X} $ and $X_2= \ol{X \setminus X_1}$. Since  
$A= \ol{A \setminus X_1} \cup X_2$ and $\dim_O X_2 <d$, by the inductive hypothesis 
it suffices to prove the thesis for $\ol {A \setminus X_1}$.

In other words we can assume that $A =\ol{A \setminus X}$.
 As a consequence  Lemma \ref{XY}  shows that there exists a rational number
$\sigma >s$ such that, if $K= \R^n \setminus   \calH(X,\sigma)$ then 
$A\cap K \seq A$. 

\medskip

Let $g_0=f_1$. 
We will recursively construct polynomial functions $g_1 \ldots, g_q$ such that, if  
$F_i=(g_i, f_2, \ldots, f_{n-d})$, 
then for any $i= 0, \ldots, q$ the semialgebraic subset 
$$A_i=\{x \in \R^n \ |\ F_i(x)=0, h_j(x)\geq 0, j=i+1,\dots,q\}=V(g_i) 
\cap V \cap \Lambda_{i}$$ 
satisfies the following properties 
\bigskip

\begin{enumerate}
\item[P1(i):] $ \left\{ 
\begin{array}{ll}
 A_i\seq A_{i-1} \mbox{ and } A_i\cap K\seq A_{i-1}\cap K & \mbox{ if } i\in\{1, \ldots, q\}\\
  A_0\cap K \seq A_0 & \mbox{ if }  i=0
  \end{array} \right. $
\item[P2(i):] $Z_i \cap A_i\cap K \subseteq \{O\}$
\item[P3(i):] $\Sigma(F_i) \cap A_i  \cap K \subseteq \{O\}$.
\end{enumerate}

\bigskip

As proved above, the set $A_0=A$ satisfies the properties P1(0), P2(0) and P3(0). 
Thus assume that $0 \leq i\leq q-1$, assume that we have already constructed 
$A_i$ fulfilling the three previous properties and let us construct $g_{i+1} $ in such a way 
that $A_{i+1}$ satisfies properties P1(i+1), P2(i+1) and P3(i+1).
\medskip

For any positive integer $m$ let $g_{i+1} =g_i^2-h_{i+1}^m$.

We want to see that there exists $m_s\in \N$ such that for any odd integer 
$m\geq m_s$ the semialgebraic set $A_{i+1}= V(g_{i+1}) 
\cap V\cap\Lambda_{i+2} $ satisfies properties 
P1(i+1), P2(i+1) and P3(i+1).

Properties P2(i) and P3(i) evidently guarantee that $(A_i\cap K ) \cap 
\Sigma(F_i) \cup Z_i )\subseteq \{O\}$. We will need to stregthen 
this fact as follows

{\bf Claim}:
There exists $\beta>s$ such that $\calH (A_i \cap K ,\beta ) \cap 
\left(\Sigma(F_i) \cup Z_i\right) \subseteq \{O\}$.

Namely, let $\phi : \Sigma(F_i)  \cup Z_i \to \R$ be the function defined by 
$\phi (x) = d(x, A_i\cap K)$
for every $x\in \Sigma(F_i)\cup Z_i$. The function $\phi$ is
semialgebraic, continuous and, by the previous properties P2(i) and P3(i), 
$V(\phi )= A_i \cap K\cap \left(\Sigma(F_i) 
\cup Z_i\right)   \subseteq \{O\}$.
Hence by Proposition \ref{Loj}  there exists a rational positive number 
$\beta$ such that $d(x, A_i\cap K ) > \|x\|^\beta  $ for all 
$ x\in \left(\Sigma(F_i)\cup Z_i\right) \setminus \{O\}$; evidently 
we can assume that $\beta > s$.
By definition of horn-neighborhood no  $x\ne O$ can lie in  
$\calH (A_i\cap K, \beta ) \cap \left(\Sigma(F_i)\cup Z_i\right) $ 
which proves the Claim.
\medskip

In particular for each $j=i+1, \ldots ,q$ 
we have that 
\begin{equation}\label{sign-h-i}
h_j |_{B(x, \|x\|^\beta)} >0 \qquad \forall \, x\in A_i \cap K \setminus \{O\} .
\end{equation}

\smallskip 
\noindent \underline{Property P1(i+1)}. Consider the set 
$E=\R^n \setminus \calH (A_i\cap K , \beta)$. 

Evidently the closed semialgebraic set $W= \left(V  \cap \Lambda_{i+1} \cap K \cap E\right) \cap \{h_{i+1} \geq 0\}$ fulfills the condition 
$$V(g_i) \cap W= (A_i\cap K ) \cap  E=\{O\}.$$ 
Thus by Proposition \ref{Loj} there exists $m_1\in \N$ such that for any integer number
$m\geq m_1$ we have  $g_i(x)^2\geq
h_{i+1}(x)^m$ for all $x\in W$ and $g_i(x)^2> h_{i+1}(x)^m$ for all $x\in
W\setminus \{O\}$.
\medskip

If we take $m$ an odd integer $\geq m_1$, by construction $g_{i+1}=g_i^2-h_{i+1}^m$ is 
strictly positive on $W\setminus \{O\}$ and on 
$\{h_{i+1}<0\}$, hence $g_{i+1}$ is strictly positive on 
$(V \cap \Lambda_{i+1} \cap K \cap E) \setminus \{O\}$. 
Since $A_{i+1}=V(g_{i+1}) \cap V \cap \Lambda_{i+1}$, it follows that 
\begin{equation}\label{rel1}
A_{i+1} \cap K \subseteq (\R^n \setminus E)\cup \{O\}=
\calH (A_i \cap K , \beta) \cup \{O\}
\end{equation} 
and therefore, by Proposition \ref{horn-lemma}, that  $$A_{i+1}\cap K \leq_s
A_i\cap K. $$

\medskip
We want now  to prove that $A_i\cap K \leq_s A_{i+1}\cap K $.

Consider the set $B_i= V\cap \Lambda_{i}\supseteq A_i$. 

Assume at first that  $B_i \cap K \setminus \{O\}$ is connected and 
denote by $d_g$ the geodesic distance on $B_i \cap K $; denote 
also by $B_g (x_0, r)=\{y\in B_i \cap K   \ |\ 
d_g (y, x_0) <r\}$ the geodesic ball centered at $x_0 \in B_i \cap K $.

By \cite{Loj}, up to working in a suitable Euclidean ball $B(O,R)$, there exist constants 
$C>0$ and $ 0<\alpha \leq 1$
such that for any $y_1,y_2 \in B_i \cap K \cap B(O,R)$ we have that 
$$\|y_1-y_2\| \leq d_g(y_1,y_2) \leq C \|y_1-y_2\|^{\alpha}.$$

Therefore we have  
$$B_g (x_0, r) \subseteq B(x_0, r) \cap B_i \cap K
\subseteq B_g (x_0, C r^{\alpha}) \qquad\forall x_0 \in B_i \cap K \cap B(O,R)$$ 
for $r$ small enough.
Up to decreasing $R$ and $\alpha$ if necessary, we can assume that 
$C=1$. 

\medskip

Property P3(i)  implies that,  for any $x\in A_i \cap K$, we have 
that $\dim_x (A_i\cap K)=d$ and, since $\rk d_x (\wt f) =n-d-1$, that  
$\dim _x (B_i  \cap K )=d+1$.
 Hence $\ol{ (B_i\cap K)\setminus (A_i  \cap K)}= B_i\cap K$.
Thus Lemma \ref{XY} assures that 
there exists a closed semialgebraic subset
$K' \subseteq B_i\cap K$ such that 
$$A_i\cap K'=A_i\cap K\cap K'=\{O\} \quad \mbox{ and } \quad 
B_i\cap K \sim_{\frac{s+\beta}\alpha} K'.$$

Evidently 
$V(g_i) \cap K'  = V(g_i) \cap B_i \cap K' =
A_i \cap K' =\{O\}.$  
Thus by Proposition \ref{Loj} there exists $m_2\in \N$ such that for any integer number
$m\geq m_2$ we have  $g_i(x)^2\geq
h_{i+1}(x)^m$ for all $x\in K'  $ and 
$g_i(x)^2> h_{i+1}(x)^m$ for all $x\in K'  \setminus \{O\}$.

If we take $m$ an integer $\geq m_2$, by construction $g_{i+1}=g_i^2-h_{i+1}^m$ is 
strictly positive on $K' \setminus \{O\}$.

Let $x \in A_i \cap K\setminus \{O\}$. As $h_{i+1}(x)>0$, 
then $g_{i+1}(x)<0$. Since 
$B_i\cap K \sim_{\frac{s+\beta}\alpha} K '$,
there exist $\eta >\frac{s+\beta}\alpha$  and $z\in K'$ such that $\|x-z\|<\|x\|^{\eta}$ 
(and we can assume that $z\ne O$).

As $g_{i+1}$ is strictly positive on $K '\setminus \{O\}$, $g_{i+1}(z)>0$. Since 
$z\in B(x, \|x\|^\eta)$, then  $z \in  B_g (x, \|x\| ^{\eta \alpha})$. So, by the
Intermediate Value Theorem on $B_g\left(x, \|x\|^{\eta\alpha}\right)$, there exists 
$w \in B_g\left(x, \|x\|^{\eta\alpha}\right) \subseteq B\left(x, \|x\|^{\eta\alpha}\right)
\cap B_i\cap K $ such that $g_{i+1}(w)=0$. 

Moreover, as $\eta \alpha>\beta$, by (\ref{sign-h-i}) one has in particular that $h_j(w)>0$
for any $j=i+2, \ldots ,q$, which means that $w\in A_{i+1}\cap K $; hence 
$x \in \calH( A_{i+1}\cap K, \eta \alpha).$

We have thus proved that $A_i \cap K\setminus \{O\}  \subseteq 
\calH(A_{i+1}\cap K, \eta\alpha )$
and therefore, since $\eta \alpha>s$, that 
$$A_i \cap K \leq_s A_{i+1}\cap K $$
by Proposition \ref{horn-lemma}.

In the general case, if $B_i\cap K \setminus \{O\}$ is not connected, 
it is sufficient to perform the previous argument on each connected component $\Delta$ 
of $B_i \cap K \setminus \{O\}$, find an odd integer number $m_{\Delta}$ as above, 
and take $m_2= \max{m_{\Delta}}$.

Hence,  if we let $M= \max\{m_1, m_2\}$, then for any odd $m \geq M$ we have 
\begin{equation}\label{partial-first-step}
A_{i+1}  \cap K \seq A_i\cap K .
\end{equation}

In order to conclude the proof that  $A_{i+1}$ satisfies property P1(i+1) observe 
that evidently 
\begin{equation}\label{first-step}
A_{i+1} \supseteq A_{i+1}\cap K 
\seq A_i\cap K \seq A\cap K \seq A
\end{equation}
and thus in particular  $A_{i+1} \geq_s A$. 

For any $\sigma '$ with $\sigma>\sigma' >s$ we have that 
$(A_{i+1}\cap \ol{\calH(X, \sigma)}) \setminus \{O\}\subseteq \calH(X, \sigma')$; hence 
$A_{i+1}\cap \ol{\calH(X, \sigma)} \leq_s X 
\subseteq A$ and thus $A_{i+1}\cap \ol{\calH(X, \sigma)} \leq_s A$.
Moreover from (\ref{first-step}) $A_{i+1}\cap K  \leq_s A$. 
Since 
$$A_{i+1} = (A_{i+1} \cap K )\cup 
(A_{i+1}\cap \ol{\calH(X, \sigma)}),$$
by Proposition \ref{union} we have $A_{i+1} 
\leq_s A$ and thus $A_{i+1} \seq A$. 
By the inductive hypothesis 
we also get that $$A_{i+1}  \seq A_i $$  and so P1(i+1) is proved.

\bigskip
\noindent \underline{Property P2(i+1)}.
By (\ref{rel1}) and the previous Claim, we have that 
\begin{equation}\label{sigma}
A_{i+1}\cap K \cap ( \Sigma(F_i) \cup Z_i)\subseteq \{O\}.
\end{equation} 
Thus $h_j$  does not vanish on $A_{i+1}\cap K \setminus \{O\}$
for any $j\geq i+1$, which in particular proves that $A_{i+1}$ satisfies property P2(i+1).

\bigskip

\noindent \underline{Property P3(i+1)}.
In order to prove P3(i+1) consider the Jacobian matrix of 
$F_{i+1}=(g_{i+1},f_2, \dots, f_{n-d})$, i.e.

$$\begin{pmatrix}
2 g_i \nabla g_i -m\, h_{i+1}^{m-1} \nabla h_{i+1} \\ 
\nabla f_2\\ \vdots \\ \nabla f_{n-d}
\end{pmatrix}.$$  

Evaluating it on the points of $A_{i+1}$ we get the matrix
$$\begin{pmatrix}
h_{i+1}^{\frac m2}(2  \nabla g_i -m\, h_{i+1}^{\frac m2-1} \nabla h_{i+1} )\\ 
\nabla f_2\\ \vdots \\ \nabla f_{n-d}
\end{pmatrix}.$$  
Since, as seen above,  $h_{i+1}$ does not vanish on $A_{i+1}\cap K \setminus \{O\}$, 
$$\Sigma (F_{i+1}) \cap A_{i+1} \cap K = 
\{ x\in  A_{i+1} \cap K \ |\ 
(2  \nabla g_i -m\, h_{i+1}^{\frac m2-1} \nabla h_{i+1} ) \wedge  
\nabla f_2 \wedge \ldots \wedge \nabla f_{n-d} =0\}.$$

If we let $\varphi= 4 \|\nabla g_i \wedge \nabla f_2 \wedge\ldots \wedge \nabla f_{n-d} \|^2$
and $\psi = m^2 \|  \nabla h_{i+1} \wedge \nabla f_2 \wedge \ldots \wedge \nabla f_{n-d}\|^2$
we have that 
$$\Sigma (F_{i+1}) \cap A_{i+1}\cap K = 
\{ x\in  A_{i+1} \cap K \ |\ \varphi (x) = |h_{i+1}(x)| ^{m-2} \psi(x)\}.$$

Since $V(\varphi) =\Sigma(F_i)$ and 
by (\ref{sigma}) $V(\varphi) \cap A_{i+1} \cap K \subseteq  \{O\}$, by Proposition 
\ref{Loj} there exists $\lambda$ such that $\varphi(x) \geq \|x\|^\lambda$ on  
$A_{i+1} \cap K $.
For the same reason there exists $\mu$ such that $|h_{i+1}(x)|^\mu \leq \|x\|$ on  
$A_{i+1} \cap K $. Moreover there exists a constant $N$ such that 
$\psi \leq N$  on  $A_{i+1} \cap K $.

If $m> \lambda \mu +2$, then $\Sigma (F_{i+1}) \cap A_{i+1} \cap K 
\subseteq \{O\}$. 
Namely, if by contradiction  there exists a sequence of points  
$x_\nu\in A_{i+1} \cap K$ 
converging to $O$ such that $\varphi (x_\nu) = |h_{i+1}(x_\nu)| ^{m-2} \psi(x_\nu)$, then 
$$\|x_\nu\|^{\lambda \mu}\leq N^\mu  \|x_\nu\|^ {m-2}$$
which is a contradiction.

Let $m_3$ be an integer such that $m_3 > \lambda \mu+2$.   
Thus for any odd integer $m\geq m_3$ we have that $A_{i+1}$ satisfies property P3(i+1). 
In particular  $\dim_O (A_{i+1}\cap K)  =d$. 
\medskip

Finally, if we let $m_s =\max \{M, m_3\}$, then  for any odd integer $m\geq m_s$ 
we have that $A_{i+1}$ satisfies all the properties P1(i+1), P2(i+1) and P3(i+1).

\bigskip

At the end of the recursive construction, observe that 
the set $A_q$ is algebraic, $A_q \seq A$, $A_q \cap K \seq A\cap K \seq A$ 
and $\dim_O (A_q\cap K) =d$.
Moreover 
$$\ol{A_q \cap K }^Z \leq_s A_q \leq_s  A    \leq_s A_q \cap K \leq_s 
\ol{A_q \cap K }^Z.$$
Thus $S= \ol{A_q \cap K }^Z$ satisfies the thesis.
\end{proof} 

The previous theorem allows us to strengthen the following  result on approximation
preserving dimension which can be found in \cite{FFW-semi}:

\begin{teo}\label{general-semi} 
Let $A$ be a closed semianalytic subset of $\R^n$ with $O\in A$. 
Then for any $s \geq 1$ there exists a closed semialgebraic set $S\subseteq \R^n$ 
such that 
$A \sim_s S$ and $\dim_O S= \dim_O A$.
\end{teo}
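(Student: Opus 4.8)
The plan is to derive Theorem \ref{general-semi} as a direct consequence of the previously established result in \cite{FFW-semi} together with the new Theorem \ref{preserve-dim}. The starting point is the statement cited from \cite{FFW-semi}: given a closed semianalytic set $A \subseteq \R^n$ with $O\in A$ and given $s\ge 1$, there exists a closed semialgebraic set $Y\subseteq \R^n$ with $A \seq Y$ and $\dim_O Y = \dim_O A$. If $\dim_O A = 0$, then $O$ is isolated in $A$ and the conclusion is immediate (take $S=\{O\}$, or the point itself), so we may assume $d = \dim_O A \ge 1$.

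First I would apply the result of \cite{FFW-semi} to produce a closed semialgebraic set $Y$ with $A \seq Y$ and $\dim_O Y = d$. Then I would invoke Theorem \ref{preserve-dim} on $Y$: since $Y$ is a closed semialgebraic set of codimension $\ge 1$ (because $d\le n-1$; note $\dim_O A \le n-1$ since a codimension hypothesis is implicit once $O$ is a non-isolated point, and in any case if $\dim_O A = n$ one replaces $A$ by $\R^n$ itself which is already algebraic) with $O\in Y$, there exists an algebraic subset $S$ of $\R^n$ with $Y \seq S$ and $\dim_O S = \dim_O Y = d$. By transitivity of $\seq$ (recalled just after the definition of $s$-equivalence), $A \seq Y \seq S$ gives $A \seq S$, and $\dim_O S = d = \dim_O A$. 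This $S$ is then a closed semialgebraic — indeed algebraic — set fulfilling the conclusion, which is even slightly stronger than stated.

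The only genuine point requiring care is the codimension bookkeeping: Theorem \ref{preserve-dim} is stated for sets of codimension $\ge 1$, so one must check that $\dim_O Y \le n-1$. This holds because $\dim_O Y = \dim_O A$ and the interesting case is $\dim_O A \le n-1$; if $\dim_O A = n$ then near $O$ the germ of $A$ is the germ of $\R^n$, which is already algebraic, so nothing is to prove. I expect no real obstacle here — the substance of the theorem is entirely carried by Theorem \ref{preserve-dim} and by the semianalytic-to-semialgebraic approximation of \cite{FFW-semi}; the present statement is their composition via transitivity of $s$-equivalence. One should, however, phrase the hypothesis so that it matches: the statement as written does not assume codimension $\ge 1$, so in the write-up I would explicitly dispose of the full-dimensional case first and then run the argument above in the positive-codimension case.

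\begin{proof}
Set $d=\dim_O A$. If $d=0$ then $O$ is isolated in $A$ and $S=\{O\}$ works. If $d=n$, then the germ of $A$ at $O$ coincides with the germ of $\R^n$, which is algebraic, so $S=\R^n$ works. Assume then $1\le d\le n-1$. By the result of \cite{FFW-semi} recalled above, there exists a closed semialgebraic set $Y\subseteq \R^n$ with $O\in Y$, $A\seq Y$ and $\dim_O Y=d$. Since $Y$ is closed semialgebraic of codimension $\ge 1$ with $O\in Y$, Theorem \ref{preserve-dim} yields an algebraic subset $S\subseteq \R^n$ with $Y\seq S$ and $\dim_O S=\dim_O Y=d$. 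As $\seq$ is transitive, $A\seq Y\seq S$ gives $A\seq S$, and $\dim_O S=d=\dim_O A$.
\end{proof}
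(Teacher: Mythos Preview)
Your argument is circular. Theorem \ref{general-semi} is not proved in this paper; it is \emph{quoted} from \cite{FFW-semi} --- the sentence introducing it reads ``the following result on approximation preserving dimension which can be found in \cite{FFW-semi}''. The ``result of \cite{FFW-semi}'' you invoke as your starting point (a closed semialgebraic $Y$ with $A\seq Y$ and $\dim_O Y=\dim_O A$) \emph{is} Theorem \ref{general-semi} itself. So your proof assumes what it claims to establish.

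What you have actually written is the (correct) proof of Corollary \ref{preserve-semianal-dim}: combine Theorem \ref{general-semi} (semianalytic $\to$ semialgebraic, same dimension) with Theorem \ref{preserve-dim} (semialgebraic $\to$ algebraic, same dimension) via transitivity of $\seq$. That is precisely how the paper obtains the corollary. The application of Theorem \ref{preserve-dim} in your second step is redundant for proving Theorem \ref{general-semi}, since a semialgebraic $S$ is already what is asked for; and once you strip that step out, nothing remains but the citation.
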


From Theorem \ref{preserve-dim} and from Theorem \ref{general-semi} 
we immediately obtain:

\begin{cor}\label{preserve-semianal-dim}
 For any real number
$s\geq 1$ and for any closed semianalytic set $A \subset \R^n$ of
codimension $\geq 1$ with $O\in A$, there exists an algebraic subset
$S$ of $\R^n$ such that $A \seq S$ and $\dim_O S= \dim_O A$.
\end{cor}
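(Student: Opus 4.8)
The plan is to derive the statement simply by chaining the two preceding theorems, using that $s$--equivalence is transitive. Let $A\subset\R^n$ be closed semianalytic of codimension $\geq 1$ with $O\in A$, and fix $s\geq 1$. First I would apply Theorem \ref{general-semi} to $A$: it produces a closed \emph{semialgebraic} set $S'\subseteq\R^n$ with $A\sim_s S'$ and $\dim_O S'=\dim_O A$. Note that $O\in S'$, since belonging to both sets is built into the definition of $\sim_s$.

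Next I would observe that $S'$ still satisfies the hypotheses of Theorem \ref{preserve-dim}: it is closed semialgebraic, and $\dim_O S'=\dim_O A\leq n-1$, so its codimension is again $\geq 1$. Hence Theorem \ref{preserve-dim}, applied to $S'$ with the same $s$, yields an algebraic subset $S\subseteq\R^n$ with $S'\sim_s S$ and $\dim_O S=\dim_O S'$. Combining the two equivalences, $A\sim_s S'\sim_s S$ gives $A\sim_s S$ by transitivity of $\sim_s$, while $\dim_O S=\dim_O S'=\dim_O A$; this is exactly the assertion.

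There is essentially no obstacle here: all of the substance sits in Theorem \ref{general-semi} (a result from \cite{FFW-semi}) and in Theorem \ref{preserve-dim} proved above. The only point worth recording is that the passage from $A$ to $S'$ preserves the local dimension at $O$, hence the codimension bound, so that Theorem \ref{preserve-dim} may indeed be invoked in the second step.
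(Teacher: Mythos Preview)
Your argument is correct and matches the paper's approach: the corollary is stated as an immediate consequence of Theorem~\ref{preserve-dim} and Theorem~\ref{general-semi}, and your proof spells out precisely this chaining via transitivity of $\sim_s$. The only check needed is that the semialgebraic set $S'$ produced by Theorem~\ref{general-semi} still has codimension $\geq 1$ so that Theorem~\ref{preserve-dim} applies, and you observe this explicitly.
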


\begin{ex}\label{examp}
{\rm If $A=\{(x,y,z)\in \R^3 \ |\ z=0, x\ge 0, y\geq 0\}$ and $s\geq 1$, 
the approximation technique 
described in the proof of Theorem \ref{preserve-dim} yields a surface defined by 
$(z^2-x^m)^2 -y^p=0$ for suitable odd integers $m$ and $p$; the shape of such a 
surface is represented in Figure \ref{fig:app}
\begin{figure}[htbp]
  \centering
  \includegraphics[width=.65\hsize]{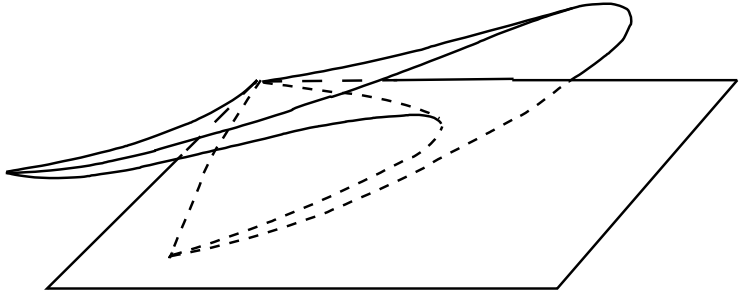} 
  \caption{\footnotesize \sl Algebraic approximation of a quadrant}
  \label{fig:app}
\end{figure}
}
\end{ex}

\end{document}